\newcommand{\vr}{\varrho}
\newcommand{\eps}{\varepsilon}
\newcommand{\Z}{{\mathbb Z}}
\newcommand{\B}{{\mathsf B}}
\newcommand{\V}{{\mathcal V}}
\newcommand{\R}{{\mathbb R}}
\newcommand{\cCC}{{\mathcal C}}
\newcommand{\NN}{{\mathcal N}}
\newcommand{\Tsp}{{\mathcal T}}
\newcommand{\F}{{\mathcal F}}
\newcommand{\RI}{\mathop{\mathrm{RI}}}
\newcommand{\s}{{\widehat S}}
\newcommand{\I}{{\mathcal I}}
\newcommand{\J}{{\mathcal J}}
\let\phi=\varphi
\newcommand{\hL}{\widehat{L}}
\newcommand{\convlaw}{\stackrel{\text{\tiny law}}{\longrightarrow}}
\newcommand{\IP}{{\mathbb P}}
\newcommand{\IE}{{\mathbb E}}
\newcommand{\distTV}{\mathop{dist}\nolimits_{\mathit{TV}}}
\DeclareMathSymbol{\widehatsym}{\mathord}{largesymbols}{"62}
\newcommand{\tZ}{\widetilde{Z}}
\newcommand{\capa}{\mathop{\mathrm{cap}}}
\newcommand{\hm}{\mathop{\mathrm{hm}}\nolimits}
\newcommand{\hhm}{\mathop{\widehat{\mathrm{hm}}}\nolimits}
\newcommand{\htau}{\widehat{\tau}}
\newcommand{\ttL}{{\widetilde L}}
\newtheorem{theo}{Theorem}
\newtheorem{lem}[theo]{Lemma}
\newtheorem{prop}[theo]{Proposition}
\newtheorem{rem}[theo]{Remark}
\title{Two-dimensional random interlacements: 
0-1 law and the vacant
set at criticality}
\author{Orph\'ee Collin$^{1}$ \and
 Serguei~Popov$^{2}$}
\begin{document}

\maketitle

{\footnotesize 
\noindent $^{~1}$Universit\'e Paris Cit\'e 
and Sorbonne Universit\'e, CNRS, Laboratoire de Probabilit\'es,
Statistique et Mod\'elisation, F--75013 Paris, France
\\
\noindent e-mail:
\texttt{collin@lpsm.paris}

\noindent $^{~2}$Centro de Matem\'atica, University of 
Porto, Porto, Portugal\\
\noindent e-mail: \texttt{serguei.popov@fc.up.pt}

}

\begin{abstract}
 We correct and streamline the proof of the fact
that, at the critical point $\alpha=1$, 
the vacant set of the two-dimensional random interlacements
is infinite~\cite{CP17}. Also, we prove a zero-one
law for a natural class of tail events related to
the random interlacements.
\end{abstract}

This note is about the model of 
random interlacements on the two-dimen\-sional
integer lattice~$\Z^2$.
We define and discuss this model in a more detailed
way later, but, informally, it is a ``Poissonian
soup'' of (double-infinite)
trajectories of simple random walk
\emph{conditioned} on never entering the origin
(we define these formally later);
$\alpha>0$ stands for the intensity 
parameter (or ``level'') of the corresponding
Poisson process of trajectories.
A site~$x\in\Z^2$
is called \emph{vacant} if no trajectory 
of that soup passes through
it. In~\cite{CPV16}, it was shown that $\alpha=1$
is \emph{critical}, in the following sense: 
if $\alpha<1$ then there are infinitely many vacant
sites a.s., while for $\alpha>1$ a.s.\ there is only 
a finite number of these. The question of
what happens exactly at the critical level
$\alpha=1$ was left open
in~\cite{CPV16} and was the main subject of
the subsequent paper~\cite{CP17}.
Let us restate here Theorem~1.2 of~\cite{CP17}:
\begin{theo}
\label{t_critical_vacant}
At the (critical) level $\alpha=1$, 
a.s.\ there are infinitely many vacant
sites.
\end{theo}
Unfortunately, as we explain below, the proof
of this result in~\cite{CP17} contains a flaw;
one of the main purposes of this note is 
to rectify that proof. While doing so, 
we also make it conceptually much simpler
by taking advantage of a certain 0-1 law valid
for two-dimensional random interlacements
(stated as Theorem~\ref{t_spacewise_01} below),
which can be seen as the other main result of
this note.

Let us now quickly recall the relevant notations and 
definitions
(we will mostly use the notations of~\cite{CP17};
see also Chapters~3, 4, and~6 of~\cite{P21}). 
In the following, 
SRW stands for simple 
random walk on~$\Z^2$
or on the torus $\Z^2_n:=\Z^2/n\Z^2$. We write $x\sim y$
when~$x$ and~$y$ are neighbours in~$\Z^2$ or~$\Z^2_n$.
Being $\|\cdot\|$ the Euclidean norm,
$\B(x,r)=\{y\in\Z^2: \|y-x\|\leq r\}$ is the (discrete)
disk of radius~$r$ centered at~$x$, and $\B(r):=\B(0,r)$
(we also sometimes consider such disks placed on tori).
Then, $|A|$ stands for the cardinality of~$A$,
$\partial A = \{x\in A : \text{there is }
y\in\Z^2\setminus A \text{ such that }x\sim y\}$
is the (inner) boundary of $A\subset \Z^2$,
$\hm_A(\cdot)$ is the harmonic measure (with respect to SRW)
on~$A$ (it is concentrated on~$\partial A$ and is 
only well-defined when~$A$ is finite).
For $A\subset A'$, an \emph{excursion}
between~$\partial A$ and~$\partial A'$ is a (finite)
piece of a nearest-neighbor trajectory that starts
at~$\partial A$ and ends on its first visit to~$\partial A'$;
in this paper, we will only consider excursions between the 
boundaries of concentric disks.

The conditioned SRW in two dimensions is defined as
the Doob's $h$-transform of the SRW with respect
to its potential kernel~$a(\cdot)$: 
for $0\neq x\sim y$, the transition probability
 from~$x$ to~$y$ is equal to~$\frac{a(y)}{4a(x)}$.
It is possible to show (see~\cite{CPV16} or
Chapter~4 of~\cite{P21}) that this new 
random walk is transient and reversible.
Then, as explained in~\cite{CPV16}
(see also Chapter~6.2 of~\cite{P21}), 
one can define the random interlacements
canonically using the results of~\cite{Tei09}
(we also mention that a somewhat different
approach was used in~\cite{Rod19}).
We will use the abbreviation~$\RI(\alpha)$
 for two-dimensional random interlacements
at level~$\alpha>0$, and $\V^\alpha$ will denote 
the vacant set (i.e., the set of vacant sites)
of $\RI(\alpha)$. It is also important to have in mind that,
differently from the ``classical'' random interlacements
introduced in~\cite{S10} in
 ``transient'' dimensions $d\geq 3$,
in two dimensions the process is \emph{not}
stationary in space (in particular, 
as shown in~\cite{CPV16}, the probability 
that~$x\in\Z^2\setminus\{0\}$
is vacant for~$\RI(\alpha)$
is of order~$\|x\|^{-\alpha}$).


As mentioned before,
the construction described
on the last pages of~\cite{CP17}, 
unfortunately, contains a flaw.
This was overlooked by the authors of~\cite{CP17} due to 
a mistake in the excursion count calculation
just before~(83) 
 (for the SRW on the torus,
before time~$t_k$ defined there):
the correct leading term should be 
$\frac{1}{2\ln \gamma}\ln^2 b_k$ instead of
$\frac{2}{\ln \gamma}\ln^2 b_k$.
That would ruin the subsequent argument since 
the number of excursions generated by the interlacements
would be more than enough to cover the corresponding disk.

Below, we present a corrected argument.
It is possible to modify the construction
of~\cite{CP17} ``mechanically'' to address that issue 
(basically, in \cite{CP17}'s notation,
considering $B_k=\B\big(v_k, \frac{b_k}{\ln^2 b_k}\big)$
instead of $B_k=\B(v_k, b_k^{1/2})$ and modifying~$B'_k$ 
and~$b_k$ accordingly would suffice), but we prefer to
use this paper to present
a substantially simpler and cleaner 
(as explained right after~\eqref{01_vacant})
way of proving the result. 

\begin{proof}[Proof of Theorem~\ref{t_critical_vacant}]
First, assume that we have the following general fact:
\begin{equation}
\label{01_vacant}
 \text{For any }\alpha>0, \quad 
  \IP\big[|\V^\alpha|=\infty\big] = 0 \text{ or }1.
\end{equation}
Then, with~\eqref{01_vacant} at hand, if one assumes
that $|\V^1|<\infty$ a.s., that would mean that
for any $\eps>0$ there is $R=R(\eps)$ such that
$\IP[\V^1\subset \B(R)]>1-\eps$. Therefore,
to obtain a contradiction, it suffices to prove that,
 for any fixed~$R>0$,
the vacant set~$\V^1$ contains a site outside 
of~$\B(R)$ with uniformly positive probability. 
This means that one does not need to control 
the dependencies in the whole sequence of events
$(\{\V^1\cap B_k\neq\emptyset\})_{k\geq 1}$
(as was done in~\cite{CP17}, see the proof of~(75))
to prove that an infinite number of these occurs
a.s.; just estimating (from below) the probability
of a generic event of that sequence would suffice.

We now derive such an estimate. 
To do this in a cleaner way (compared to~\cite{CP17}),
we need another preliminary result.
\begin{prop}
\label{p_ind_exc_notcover}
 Let $\gamma>1$ and $\beta>0$ be fixed numbers.
Consider $\frac{2\ln^2 n}{\ln \gamma}
 - (1+\beta)\frac{\ln n\ln\ln n}{\ln\gamma}$
 independent SRW's excursions between~$\partial\B(n)$
and~$\partial\B(\gamma n)$, with starting points
chosen according to~$\hm_{\B(n)}$. Then,
for some positive~$c'$ and~$c''$ depending only
on~$\beta$ and~$\gamma$
\begin{equation}
\label{eq_ind_exc_notcover}
\IP[\B(n)\text{ is not completely covered
by these excursions}] 
\geq 1 - c' \exp\big(-c''(\ln\ln n)^2\big)
\end{equation}
for all~$n\geq 3$.
\end{prop}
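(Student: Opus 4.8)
The plan is a first- and second-moment argument over the sites of $\B(n)$ left uncovered by the excursions, with the second moment truncated along a multi-scale scheme. Write $N$ for the stated number of excursions and, for $x\in\B(n)$, let $p_x$ be the probability that a single excursion, started from $\hm_{\B(n)}$, visits $x$. Standard estimates for SRW hitting probabilities between concentric disks (see~\cite{CPV16,P21}) give $p_x=\frac{\ln\gamma}{\ln n}(1+o(1))$, with an error small enough that $Np_x=2\ln n-(1+\beta)\ln\ln n+O(1)$, uniformly over $x\in\B(n/2)$; by independence of the excursions the probability that $x$ is avoided by all $N$ of them is therefore $(1-p_x)^N\asymp n^{-2}(\ln n)^{1+\beta}$. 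Writing $Z$ for the number of uncovered sites of $\B(n/2)$ --- it suffices to find one such site, and restricting to $\B(n/2)$ keeps the concentric disks used below inside $\B(\gamma n)$ --- we get $\IE[Z]\asymp(\ln n)^{1+\beta}\to\infty$. The whole problem is to upgrade $\IE[Z]\to\infty$ to $\IP[Z>0]\to 1$; the value $(1+\beta)$ with $\beta>0$ is precisely what places the target just below the typical maximum of the underlying log-correlated field, the regime in which this holds.

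A plain second moment fails, because the events $\{x\ \text{uncovered}\}$ are log-correlated: a short computation with two-point hitting probabilities gives, for $x\neq x'$ at distance $\rho$,
\[
\IP[x,x'\ \text{uncovered}]=\IP[x\ \text{uncovered}]\cdot\IP[x'\ \text{uncovered}]\cdot(n/\rho)^{\Theta(1)},
\]
so $\IE[Z^2]$ exceeds $(\IE[Z])^2$ by a polynomial factor. I would instead use the multi-scale refinement of the second-moment method. Fix dyadic scales $r_j=2^{-j}r_0$, with $r_0$ of order $n$ small enough that $\B(x,r_0)\subset\B(\gamma n)$ for all $x\in\B(n/2)$, and $0\le j\le J:=\lfloor\log_2 r_0\rfloor$; let $H_j(x)$ be the number of the $N$ excursions that ever enter $\B(x,r_j)$, so $H_0(x)$ is of order $N$ and $\{x\ \text{uncovered}\}=\{H_J(x)=0\}$. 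By the strong Markov property $H_j(x)$ is a thinning of $H_{j-1}(x)$, so $(H_j(x))_{j}$ decreases across scales in a random-walk-like fashion, and conditioning on $\{H_J(x)=0\}$ forces it, typically, to stay within an explicit corridor $j\mapsto[b_j,B_j]$ around its conditioned mean (with the usual Bramson-type bending near the endpoints). Replace $Z$ by
\[
\widehat Z=\sum_{x\in\B(n/2)}\1{H_J(x)=0}\cdot\1{G_x},\qquad
G_x:=\{\,b_j\le H_j(x)\le B_j\ \text{for all}\ 0\le j\le J\,\},
\]
and aim to show $\IE[\widehat Z]\to\infty$ and $\IE[\widehat Z^2]\le(1+o(1))(\IE[\widehat Z])^2$; then $\IP[\widehat Z>0]\to1$ by Paley--Zygmund, which is~\eqref{eq_ind_exc_notcover}.

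Two inputs carry this out. A one-point (ballot / entropic-repulsion) estimate: constraining $(H_\bullet(x))$ both to reach $0$ at scale $J$ and to stay in the corridor costs only a sub-polynomial factor over merely reaching $0$, so $\IE[\widehat Z]$ still diverges --- this is where the corridor must be wide enough, and where the slack provided by $\beta>0$ is spent. A two-point estimate: for $x\neq x'$ at distance $\rho\in(r_{k+1},r_k]$, the counts $H_j$ at scales $j>k$ around $x$ and around $x'$ are, conditionally on the coarse (scale $\le k$) configuration, driven by essentially disjoint families of excursions and hence conditionally independent, while the corridor caps the coarse configuration at scale $k$; fed back into the one-point estimate and summed over $\rho$, this yields the required bound on $\IE[\widehat Z^2]$. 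The main obstacle is exactly the tension between these two: the corridor must be wide enough for the one-point bound to keep $\IE[\widehat Z]\to\infty$ for \emph{every} $\beta>0$, yet narrow enough for the two-point decorrelation to hold uniformly over all pair-distances $\rho$ --- pinning down the corridor, and tracking which excursions are ``shared'' between the fine parts of two descents, is the technical heart. (The scale-decomposition and related estimates for SRW excursions between concentric disks that underlie all of this are available in~\cite{CPV16,P21}; in spirit this is the subleading-order analysis of two-dimensional cover times.)
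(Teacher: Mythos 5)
Your route is genuinely different from the paper's, and it is worth being clear about what each buys. You propose to prove the statement directly by the truncated (multi-scale, barrier-constrained) second-moment method applied to the uncovered sites of $\B(n/2)$ --- i.e., essentially to redo the subleading-order cover-time analysis of Belius--Kistler and Abe in the excursion setting. The paper explicitly acknowledges that this is possible (``by a suitable modification of the arguments of [A21]'') but deliberately avoids it: instead it uses Abe's torus cover-time theorem as a black box, transfers between the SRW's excursions and independent excursions via soft local times, and --- this is the one genuinely new idea --- amplifies the resulting \emph{constant} lower bound on the non-covering probability into a high-probability statement by tiling $\B(n)$ with $\kappa_h\asymp(\ln\ln n)^2$ disjoint sub-disks, each of which independently fails to be covered with probability $c_{\beta/2}>0$. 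Your first-moment computation is correct ($\IE[Z]\asymp(\ln n)^{1+\beta}$ after restricting to $\B(n/2)$), and your diagnosis of why the plain second moment fails (log-correlations) is also correct.

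The problem is that, as written, your argument defers rather than closes the main difficulty. The two inputs you name --- the entropic-repulsion one-point estimate showing the corridor costs only a subpolynomial factor for \emph{every} $\beta>0$, and the two-point decorrelation at scales finer than the pair separation --- are precisely the content of the long technical cores of [BK14] and [A21]; you state them as goals but do not establish them, and you yourself identify ``pinning down the corridor'' as the unresolved heart. Two specific points would need real work: first, $H_j(x)$ (the number of excursions entering $\B(x,r_j)$) is not a thinning of $H_{j-1}(x)$ in any useful sense --- a single coarse excursion generates a random number of finer sub-excursions, so the correct scale-decomposition tracks excursion counts between consecutive annuli $\partial\B(x,r_j)$, $\partial\B(x,r_{j-1})$, whose evolution across scales is only approximately Markovian; second, for $x,x'$ at distance $\rho\in(r_{k+1},r_k]$ the fine-scale excursion families around $x$ and $x'$ are generated by the \emph{same} global excursions between $\partial\B(n)$ and $\partial\B(\gamma n)$, so ``essentially disjoint, hence conditionally independent'' is only an approximation whose error must be controlled uniformly in $\rho$. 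So the proposal is a viable strategy outline for a much heavier self-contained proof, not yet a proof; if you want to complete the statement without reproducing that machinery, the paper's soft-local-times-plus-tiling bootstrap is the shorter path.
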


\begin{proof}
First, we give an outline of the proof:
Let us consider the torus~$\Z^2_{m}$ 
with $m=\lfloor 3\gamma n\rfloor$,
with concentric disks~$\B(n)$
and~$\B(\gamma n)$ embedded there in a natural way.
It is known that, if one runs the SRW on that
torus up to time $t_{m,\beta}:=\frac{4}{\pi}m^2\ln^2 m
- (1+\beta)\frac{2}{\pi}m^2\ln m \ln\ln m$,
the number of excursions
between~$\partial\B(n)$ and~$\partial\B(\gamma n)$
will be well concentrated 
around $\frac{2\ln^2 n}{\ln \gamma}
 - (1+\beta)\frac{\ln n\ln\ln n}{\ln\gamma}$. 
It is also known~\cite{A21} that
with high probability there will still be uncovered sites
on the torus by time~$t_{m,\beta}$,
and that it is possible to relate SRW's excursions
to independent excursions via e.g.\ soft local times
(as explained below); unfortunately, this is 
not yet enough to obtain~\eqref{eq_ind_exc_notcover}
since one cannot apriori exclude the possibility
that the uncovered sites are ``spatially concentrated''.
(This way, one can only obtain that the probability
in~\eqref{eq_ind_exc_notcover} is uniformly bounded
 from below, which was still enough for the argument
of~\cite{CP17} at the cost of some additional 
technical difficulties.)
 In fact, one can still obtain~\eqref{eq_ind_exc_notcover}
(actually, \eqref{uncov_B(n)} below, that would by its turn
imply~\eqref{eq_ind_exc_notcover})
by a suitable modification of the arguments of~\cite{A21}
(or, in the continuous setting, by a modification
of the arguments of~\cite{BK14})
as the authors were able to find out
thanks to private communications 
with Y.~Abe and O.~Zeitouni.
However, we prefer to present a ``softer'' way to 
obtain~\eqref{eq_ind_exc_notcover}, which only uses
the main cover time result and does not 
require plunging into the technical details
of~\cite{A21,BK14}.

Also, we have to mention that, in principle,
it should be possible to prove 
Proposition~\ref{p_ind_exc_notcover} in a ``direct''
way, i.e., without referring to the SRW on the torus
at all. However, it is quite likely that such a proof will
have a similar complexity level as the corresponding
(rather lengthy and technically involved) proof
of the torus covering result. Therefore, while 
it may indeed be convenient to eventually
have such a direct
proof available, we feel that this note would not 
be the right place for it.

We prove~\eqref{eq_ind_exc_notcover} in three steps:
first, as mentioned above, we use the known results
for the cover time of the torus to prove that 
the probability of the corresponding event for 
independent excursions is uniformly positive. 
Next, we use that uniform positivity to refine a bit
the torus covering result: we will show that a fixed 
disk of radius proportional to the size of the torus
will contain an uncovered site with high probability.
Then, we translate this ``improved'' result 
back to independent excursions.
It is also worth mentioning that the harmonic 
measure~$\hm_{\B(n)}$ is not exactly the 
``correct'' one for choosing the starting points
of the excursions. With $A\subset A'$ and
$y\in\partial A$, define
\begin{equation}
\label{df_hm_AA'}
 \hm_A^{A'}(y) = \IP_y[\tau_1(\partial A')<\tau_1(A)]
\Big(\sum_{z\in\partial A}\IP_z[\tau_1(\partial A')
<\tau_1(A)]\Big)^{-1},
\end{equation}
where $\tau_1(A)=\min\{k\geq 1: S_k\in A\}$ is the hitting
time of~$A$ by the SRW $(S_n, n\geq 0)$.
For example, for the excursion process 
(between~$\partial\B(n)$ and~$\partial\B(\gamma n)$)
generated by the SRW on the torus, it is
the measure~$\hm_{\B(n)}^{\B(\gamma n)}$ 
that will be invariant for it. 
It is, however, quite close to~$\hm_{\B(n)}$:
due to Lemma~2.5 of~\cite{CP17}, we have
\begin{equation}
\label{hm_close}
 \hm_{\B(n)}^{\B(\gamma n)}(y) 
  = \hm_{\B(n)}(y)\big(1+O(n^{-1})\big);
\end{equation}
since we only need to deal with $O(\ln^2 n)$
excursions, there will be no essential difference
(in particular, Proposition~\ref{p_ind_exc_notcover}
also holds with the starting points chosen 
according to~$\hm_{\B(n)}^{\B(\gamma n)}$).

\smallskip
\noindent
\textit{Step 1.} For $k\geq 3$, denote
 $\psi_{k,\beta}=\frac{2\ln^2 k}{\ln \gamma}
 - (1+\beta)\frac{\ln k\ln\ln k}{\ln\gamma}$.
It is important to keep in mind that this 
quantity does not change a lot when one changes
the value of~$k$: if $k'=k \times(\ln\ln k)^M$ where~$|M|$
is bounded from above by a universal constant,
we have 
$\psi_{k',\beta}=\psi_{k,\beta}+O(\ln k \ln\ln\ln k)$.
In the arguments below, when we consider 
 \emph{independent} excursions in the context of
 covering~$\B(k)$, we always assume 
 that these excursions are
 between~$\partial\B(k)$
 and~$\partial\B(\gamma k)$, and  
 the starting points of these 
are sampled from~$\hm_{\B(k)}$.
Let us first prove that for all large enough~$n$ 
we have
\begin{equation}
\label{c>0_ind_exc_notcover}
\IP[\B(n)\text{ is not completely covered
by $\psi_{n,\beta}$ independent excursions}] \geq c_\beta
\end{equation}
for some $c_\beta>0$. 
For this, we note that, on the torus $\Z^2_m$
(as above, with $m=\lfloor 3\gamma n\rfloor$),
the number of SRW's excursions 
between~$\partial\B(n)$
and~$\partial\B(\gamma n)$ 
up to time~$t_{m,\beta/3}$
will be at most $\psi_{n,\beta/2}$ with probability
at least $1-c_1\exp\big(-c_2 (\ln\ln n)^2\big)$
(this follows e.g.\ from Lemma~2.11 of~\cite{CP17},
take $\delta=\frac{\eps\ln\ln n}{\ln n}$ with 
small enough~$\eps>0$).
Next, we use soft local times~\cite{P21,SLT},
to construct a coupling of the SRW's excursions
with independent excursions.
More specifically, we use the construction
with marked Poisson process (with excursions as marks)
described in Section~2.2 of~\cite{CP17};
 let us recall the notation $L_k(\cdot)$ 
for the soft local time on~$\partial\B(n)$ generated
by~$k$ excursions of the SRW on the torus,
and let $\ttL_k(y)=({\tilde \xi}_1 + \cdots+{\tilde \xi}_k)
 \hm_{\B(n)}(y)$ be the soft local times 
 for the independent
excursions.\footnote{${\tilde \xi}$'s are i.i.d.\ 
Exponential(1) random variables, used in the definition
of the soft local time.}
Using~\eqref{hm_close}
and Lemma~2.9 of~\cite{CP17}
(with $\theta=\frac{\eps\ln\ln n}{\ln n}$, where~$\eps>0$ 
is small enough), we obtain 
\begin{equation}
\label{compare_SLTs_torus}
\IP\big[\ttL_{\psi_{n,\beta}}(y)\leq L_{\psi_{n,\beta/2}}(y)
\text{ for all }y\in\partial\B(n)\big]
  \geq 1-c'_1\exp\big(-c'_2 (\ln\ln n)^2\big),
\end{equation}
meaning that the set of independent excursions
will be contained in the set of the SRW's
excursions with high probability.
Theorem~1.1 of~\cite{A21} implies that at
time~$t_{m,\beta/3}$ there will be an uncovered site 
in the torus with high probability, so (since~$\B(n)$
occupies a constant fraction of the volume of the torus) there 
will be an uncovered site in~$\B(n)$ with at least
a constant probability. This shows~\eqref{c>0_ind_exc_notcover}.

\smallskip
\noindent
\textit{Step 2.}
Now, take a large 
$h>\gamma$. Let~$\kappa_h$
be the maximal number of nonoverlapping disks
of radius~$\gamma h^{-1}n$ and with centers at integer points
that fit inside~$\B(n)$;
clearly, $\kappa_h$ is of order~$h^2$.
Let $x_1,\ldots,x_{\kappa_h}$ be these centers,
and denote $B_j=\B(x_j,h^{-1}n)$,
$B'_j=\B(x_j,\gamma h^{-1}n)$, $j=1,\ldots,\kappa_h$.
For $j=1,\ldots,\kappa_h$,
let $\tZ^{(j), k}, k\geq 1$ be the independent
excursions between~$\partial B_j$ and~$\partial B'_j$
(again, with the initial points chosen according 
to~$\hm_{B_j}$).
Consider \emph{independent} events
\[
 U_j = \Big\{B_j\text{ is not fully covered by }
  \tZ^{(j), 1},\ldots, \tZ^{(j), \psi_{h^{-1}n,\beta/2}}\Big\}, 
\]
and let $U=U_1\cup\ldots \cup U_{\kappa_h}$;
then, \eqref{c>0_ind_exc_notcover} implies that
\begin{equation}
\label{notcover_atleastone}
 \IP[U] \geq 1-(1-c_{\beta/2})^{\kappa_h},
\end{equation}
which can be made arbitrarily close to~$1$ by the choice
of~$h$. But, similarly to~\eqref{compare_SLTs_torus}
of Step~1, 
we can argue (also using Lemma~2.10
of~\cite{CP17}) that with probability at least 
$1-c''_1 \kappa_h\exp\big(-c''_2 (\ln\ln n)^2\big)$
the soft local time of real SRW's excursions 
up to time~$t_{m,\beta}$
is below the soft local time of
the above independent excursions,
meaning that~$\B(n)$ contains an uncovered site
at time~$t_{m,\beta}$ with probability
at least 
$1-(1-c_{\beta/2})^{\kappa_h}
-c''_1 \kappa_h\exp\big(-c''_2 (\ln\ln 
n)^2\big)$.
One can then choose $h=\ln\ln n$
(so that $\kappa_h\asymp (\ln\ln n)^2$)
to obtain that, for any fixed $\beta>0$,
\begin{equation}
\label{uncov_B(n)}
 \IP[\B(n) \text{ is not fully covered
by SRW on $\Z^2_m$ at time }t_{m,\beta}]
 \to 1
 \quad \text{as } n\to\infty.
 \end{equation}

\smallskip
\noindent
\textit{Step 3.}
Now, to obtain~\eqref{eq_ind_exc_notcover},
we just repeat what was done at Step~1, 
but with~\eqref{uncov_B(n)} to hand
(instead of the main cover time result of~\cite{A21}).
This concludes the proof 
of Proposition~\ref{p_ind_exc_notcover}.
\end{proof}
In fact, as the reader will see, we will only need 
the above result with one fixed~$\beta>0$;
also, we will only need 
the probability in~\eqref{eq_ind_exc_notcover}
to converge to~$1$ as $n\to \infty$. 
Still, we decided to state
Proposition~\ref{p_ind_exc_notcover}
in a more general form for the sake of possible future
reference, as proving this more general version 
does not require any considerable extra effort anyway.

We continue proving Theorem~\ref{t_critical_vacant}.
Fix a large (integer) $s>0$ 
and a site $x_s\in\Z^2$ such that $\|x_s\|=s$ (for example,
$x_s=(s,0)$);
then, define $B=\B\big(x_s, \frac{s}{\ln^2 s}\big)$,
$B'=\B\big(x_s, e \frac{s}{\ln^2 s}\big)$
(we took $\gamma = e$ just to get rid of $\ln \gamma$ terms
in the formulas).
In the following, $\RI$ stands for $\RI(1)$, 
and we remind the reader that the~$\RI$'s trajectories
are \emph{conditioned} SRWs.
Then, Lemma~2.7 of~\cite{CP17} implies that
\begin{equation}
\label{cap_B}
\capa\big(\{0\}\cup B\big) 
 = \frac{2}{\pi} \ln s 
  \times \big(1+O\big(\tfrac{\ln\ln s}{\ln s}\big)\big),
\end{equation}
and Lemma~2.6 of~\cite{CP17} implies that,
for any $x\in\partial B'$
\begin{equation}
\label{hit_B_from_partial_B'}
 \IP_x[\htau(B)<\infty]
  = 1 - \frac{1}{\ln s}
           \big(1+O\big(\tfrac{\ln\ln s}{\ln s}\big)\big),
\end{equation}
where $\htau(B)$ is the hitting time by the conditioned SRW.
So, similarly to the argument in~\cite{CP17},
the number~$N$ of RI's excursions between~$\partial B$
and~$\partial B'$ is compound Poisson with rate
$\pi\capa\big(\{0\}\cup B\big)= 2\ln s 
  \times \big(1+O\big(\tfrac{\ln\ln s}{\ln s}\big)\big)$
and (approximately) exponentially distributed
summands of mean $\ln s \times
           \big(1+O\big(\tfrac{\ln\ln s}{\ln s}\big)\big)$.
Therefore, the expected number of these excursions
is $2\ln^2 s \times
           \big(1+O\big(\tfrac{\ln\ln s}{\ln s}\big)\big)$,
and, moreover, it is straightforward to argue that
\begin{equation}
\label{Nk_Normal}
\frac{N - 2\ln^2 s}{2\ln^{3/2}s}
 \convlaw \text{standard Normal.}
\end{equation}
The fact that typical deviations of the excursion count
are rather large (of order of the mean to power~$\frac{3}{4}$) 
plays the key role in the infiniteness of the critical
vacant set: it does not cost much to have a downward
fluctuation in the 
number of RI's excursions between~$\partial B$
and~$\partial B'$ which makes it ``subcritical''.
Note that, from the above discussion it follows
that
\begin{equation}
\label{prob_count_down}
 \IP\big[N\leq 2\ln^2 s - 
  \ln^{3/2}s\big] \geq \frac{1}{4}
\end{equation}
for all large enough~$s$. 
Abbreviate $m_0=\frac{s}{\ln^2 s}$.
Next, again by means
of soft local times,
we construct a coupling of the RI excursions
with (say) 
$\psi^*:=2\ln^2 m_0 - 3\ln m_0 \ln\ln m_0$
independent ones (generated by a SRW with starting
points chosen by $\hm_B$).
Let us denote by~$\hL_k(\cdot)$ the soft local time
generated by~$k$ RI's excursions (again,
see Section~2.2 of~\cite{CP17} for definitions).
First, observe that
due to Lemma~3.3~(ii) of~\cite{CPV16},
one can successfully couple one SRW's excursion
with one conditioned SRW's excursion
started at the same site
with probability at least~$1-O(\ln^{-3} s)$.
Since there are only $O(\ln^2 s)$ of these,
we see that it is possible to couple the ``marks''
(of the marked Poisson process) with high
probability; so, we only need to couple the initial points now.
Note that, due to Lemma~2.5 of~\cite{CP17},
\begin{align}
 \hhm_B^{B'}(y) &= \hm_B(y)\big(1+O(\ln^{-3} s)\big),
 \label{est_hhmBB'} 
\end{align}
for all $y\in\partial B$.
Using the above with Lemma~2.9 of~\cite{CP17}
(take $\theta=\ln^{-3/4}s$ there),
one obtains that 
\begin{equation}
\label{SLT_RI_independent}
 \IP\big[\hL_{2\ln^2 s-\ln^{3/2}s}(y)
  \leq L_{\psi^*}(y)\text{ for all }y\in\partial B\big]
     \geq 1-c\exp(-c'\ln^{1/2}s).
\end{equation}
Therefore, by Proposition~\ref{p_ind_exc_notcover} 
and~\eqref{prob_count_down},
 RI's excursions leave a vacant site in~$B$
with probability 
at least $\frac{1}{4}-c\exp(-c'\ln^{1/2}s)$.
The above shows that $\V^1\cap B\neq \emptyset$
with uniformly positive probability, as desired.
This concludes the proof of Theorem~\ref{t_critical_vacant}
(under the assumption that~\eqref{01_vacant} holds).
\end{proof}

\begin{rem}
To prove the above result, another possible route
would be using Theorem~2.6 of~\cite{BGP18} together
with Proposition~\ref{p_ind_exc_notcover}: since we know
that $2\ln^2 s - C\ln^{3/2}s$
independent excursions do not cover~$B$ with 
probability close to~$1$, the same number
of RI's excursions also will not do that at least 
with a constant probability.
\end{rem}

We are left with the task of proving~\eqref{01_vacant}.
The event $\{|\V^\alpha|=\infty\}$ \emph{looks like}
a tail event, in the sense that it is not affected by what
happens in any finite region. Our intuition then says
that a 0-1 law should hold for it; it is however not 
immediate to obtain such a law for the model of random
interlacements since even one trajectory can 
(and will) affect what 
happens in arbitrarily remote regions. 
In fact, one can still prove~\eqref{01_vacant}
in a direct way (basically, for this 
one needs to argue that a finite number of
conditioned SRW's trajectories
cannot cover almost all sites of a fixed infinite
subset of~$\Z^2$), but we prefer to prove a more
general 0-1 law, also for future reference.
We remark also that \cite[Section~2]{S10}
contains a version of 0-1 law for the ``classical''
random interlacements (i.e., in dimensions $d\geq 3$);
however, it makes use of the translational invariance
property which is absent in two dimensions.

We need to introduce more notations.
Consider the sequence 
$\Lambda_n=\{x\in\Z^2: n-1 < \|x\| \leq n\}$ of
disjoint and nonempty subsets of~$\Z^2$; we then have
$\Z^2\setminus\{0\} = \bigcup_{n\geq 1} \Lambda_n$.
Note also that if an unbounded nearest-neighbor trajectory
passes through~$\Lambda_{n_0}$, then it has to pass 
through all $\Lambda_n$ for $n\geq n_0$ (i.e., the trajectories
cannot overjump these subsets; this is because the norms of neighboring
sites cannot differ by more than one unit).
In addition, we denote $\Theta_n=\bigcup_{j\geq n} \Lambda_j$.
Being~$\I$ an interlacement configuration
(seen as a countable set of trajectories) and~$K$
a finite subset of $\Z^2$,
we denote by $\theta_K\I$ the set of trajectories
obtained from~$\I$ by
 removing all trajectories that intersect~$K$
(i.e., we keep only the trajectories that are fully
inside~$\Z^2\setminus K$), 
and by $\sigma_K\I$ the (a.s.\ finite) set of trajectories
that intersect~$K$; clearly, $\I=\theta_K\I\cup \sigma_K\I$
and $\theta_K\I$ is independent of~$\sigma_K\I$.
For finite~$K$, we denote by~$|\sigma_K\I|$
the number of $\I$'s trajectories that intersect~$K$;
note that 
$|\sigma_K\I| 
 \sim \text{Poisson}(\pi\alpha\capa(\{0\}\cup K))$.

Now, what is the correct way to define a tail
event, for random interlacements?
Informally, if one wants to know if such an event occurs,
it is enough to look at the interlacement configuration
outside of any finite set, i.e., it only depends on
``what happens at infinity''. 
The precise definition of tail events
depends on what exactly
is meant by ``interlacement configuration
on a set'' --- one can just keep track of vacant/occupied
sites, or the field of local times, or somehow keep track of
(finite or infinite) pieces of trajectories that belong to that
set (possibly even specifying which of these pieces are parts
of the same infinite trajectory and in which order), etc.
Here we adopt a rather general approach which still permits
us to keep the notations relatively simple.

Let~$A$ be any subset of~$\Z^2\setminus\{0\}$ and let~$I$ be
a (discrete) interval in~$\Z$, finite or infinite
(i.e., $I=(a,b)\cap\Z$, where $a,b\in\R\cup\{\pm\infty\}$).
A \emph{noodle}~$w$ on~$A$ is simply a (finite or infinite) 
nearest-neighbor sequence of sites of~$A$ indexed by some interval~$I$:
$w= (x_k, k\in I)$ such that $x_k\sim x_{k+1}$
when $k,k+1\in I$. Then, the interlacement configuration
on~$A$ is the multiset (i.e., a collection
without ordering but possibly 
with repetitions) of noodles generated by the interlacement
trajectories in a natural way: 
if $\vr=(\vr(k))_{k\in\Z}$ is
a RI's trajectory, then the index set $\{k: \vr(k)\in A\}$ 
is uniquely represented as $I_1\cup I_2\cup\ldots$,
a (finite or infinite) 
union of nonadjacent discrete intervals. The 
noodles generated by that trajectory are 
$w^{(\vr)}_1=(\vr(i), i\in I_1),
w^{(\vr)}_2=(\vr(j), j\in I_2)$, and so on;
we then take the noodles generated by all the trajectories,
and ``store'' them in the multiset.
Note that, unless a noodle is indexed by~$\Z$
(i.e., it is a whole RI's trajectory), its first (last)
site should be a neighbor of 
$\Z^2\setminus A$.
It is also clear that the interlacement configuration
defined in this way is ``informative enough'': having a
multiset of noodles on~$A$, one can figure out which sites
of~$A$ are vacant, calculate the local times, etc.
Also, we will use the following notation:
if~$\I$ is the whole $\RI(\alpha)$ configuration
(i.e., the set of all its trajectories), then
$\I(A)$ will denote the interlacement configuration
(or ``noodle configuration'')
on~$A$.
Note also that $\I(A)=\sigma_A\I(A)$.

Now, let $\F_n$ be the sigma-algebra generated by
the cylinder events 
\begin{align*}
 \lefteqn{
 \big(\{\I(K)\in\NN\}, K \text{ is a finite subset of }  
 \Theta_n,
 }\\
 &\qquad \qquad \qquad \quad
 \NN \text{ is a set of finite noodle 
     configurations on } K \big);
\end{align*}
 define $\Tsp=\bigcap_{n\geq 1}\F_n$,
the sigma-algebra of tail events. The key
result (which implies~\eqref{01_vacant}) is:
\begin{theo}
\label{t_spacewise_01}
 If $E\in\Tsp$, then $\IP[E]=0$ or~$1$.
\end{theo}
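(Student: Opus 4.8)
\textbf{Proof plan for Theorem~\ref{t_spacewise_01}.}
The plan is to establish a Kolmogorov-type 0-1 law via an approximation argument: show that any tail event $E\in\Tsp$ is asymptotically independent of itself, so that $\IP[E]=\IP[E]^2$. The key structural fact is the decomposition $\I=\theta_K\I\cup\sigma_K\I$ with $\theta_K\I$ independent of $\sigma_K\I$, valid for every finite $K$. The event $E$, being in $\Tsp$, is measurable with respect to the noodle configuration on $\Theta_n$ for every $n$; I want to approximate $E$ by events depending only on what a \emph{bounded} number of trajectories do on a bounded-from-within annulus, and then use that trajectories not meeting a large central disk $\B(R)$ are, with high probability, irrelevant to any fixed ``local'' approximant of $E$.

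First I would fix $\eps>0$ and, using that $\F_1$ is generated by cylinder events on finite subsets of $\Theta_1=\Z^2\setminus\{0\}$, choose a finite set $K_0\subset\Z^2\setminus\{0\}$ and an event $E_0$ measurable with respect to $\I(K_0)$ with $\IP[E\triangle E_0]<\eps$; enlarge $K_0$ to a disk $\B(R_0)\setminus\{0\}$. Now I use the tail property in the other direction: since $E\in\F_n$ for the $n$ with $\Lambda_n$ just outside $\B(R_0)$, $E$ is also measurable with respect to the noodle configuration on $\Theta_n$. The noodle configuration on $\Theta_n$ is determined by $\sigma_{\Theta_n}\I$, but more usefully it is unaffected by the trajectories counted in $\sigma_{\B(R_0)}\I$ that \emph{stay inside} $\B(R_0)$ — except there are none, since every unbounded trajectory meeting $\B(R_0)$ must exit through every $\Lambda_j$, $j\ge n$. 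This is the subtle point: a trajectory hitting the center \emph{does} reach infinity and hence contributes noodles on $\Theta_n$. So the naive splitting does not immediately decouple $E$ from the center.

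To get around this, I would condition on the finite data $\sigma_{\B(R_0)}\I$: write $\I=\theta_{\B(R_0)}\I\cup\sigma_{\B(R_0)}\I$, note $|\sigma_{\B(R_0)}\I|\sim\mathrm{Poisson}(\pi\alpha\,\capa(\{0\}\cup\B(R_0)))$ is a.s.\ finite, and observe that $E_0$ (measurable w.r.t.\ $\I(K_0)$, $K_0\subset\B(R_0)$) is $\sigma_{\B(R_0)}\I$-measurable, while the ``escape'' behavior far away is governed jointly by $\theta_{\B(R_0)}\I$ \emph{and} the finitely many excursions-to-infinity of the trajectories in $\sigma_{\B(R_0)}\I$. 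The heart of the argument is then: conditionally on $\sigma_{\B(R_0)}\I$ having a given (finite) number of trajectories and given their entrance/exit structure relative to $\B(R_0)$, the continuation of each such trajectory after its last visit to $\B(R_0)$ is an independent conditioned-SRW path to infinity; as $R_0$ grows these finitely many tails, together with $\theta_{\B(R_0)}\I$, reconstruct a configuration whose restriction to $\Theta_n$ has the \emph{same law} as the full interlacement's (this is essentially the restriction/consistency property of random interlacements, cf.\ the way $\RI$ is built from trajectories hitting nested disks). Hence for a tail event, $\IP[E\mid \sigma_{\B(R_0)}\I]$ does not actually depend on the part of $\sigma_{\B(R_0)}\I$ describing behavior inside $\B(R_0)$, only on the (asymptotically negligible) count of trajectories. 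Letting $R_0\to\infty$, $\IP[E\mid\I(K_0)]\to\IP[E]$ in $L^1$, so $\IP[E\cap E_0]\to\IP[E]^2$; combined with $\IP[E\triangle E_0]<\eps$ this forces $\IP[E]=\IP[E]^2$.

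The main obstacle I anticipate is precisely making rigorous the claim that the ``tails to infinity'' of the finitely many trajectories hitting $\B(R_0)$, glued with $\theta_{\B(R_0)}\I$, produce on $\Theta_n$ a configuration of the correct law and, crucially, that a tail event cannot ``feel'' the in-ball portion of these trajectories: one must verify that the conditional law of $\I(\Theta_n)$ given $\sigma_{\B(R_0)}\I$ depends on $\sigma_{\B(R_0)}\I$ only through how many trajectories there are and not through where/how they entered $\B(R_0)$ — or, better, show directly that for $n$ large the dependence on those entrance data washes out as $R_0\to\infty$ because the harmonic measure from $\partial\B(R_0)$ on $\partial\B(R_1)$ (for fixed large $R_1$) is asymptotically independent of the starting point on $\partial\B(R_0)$, so all the in-ball trajectories ``forget'' their history by the time they reach $\Theta_n$. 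Quantifying this forgetting — e.g.\ via the estimates on $\hm$ and hitting probabilities for the conditioned walk already invoked in the excerpt (Lemmas~2.5--2.7 of~\cite{CP17} and Lemma~3.3 of~\cite{CPV16}) — and pushing it through the multiset/noodle formalism is the technical core; the rest is a routine Kolmogorov 0-1 law packaging.
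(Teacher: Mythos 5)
Your plan is, at its core, the same strategy as the paper's: condition on the a.s.\ finite collection $\sigma_{\B(R_0)}\I$ of trajectories meeting a finite central region, and show that their influence on $\I(\Theta_n)$ washes out, so that a tail event is independent of every finite-region cylinder event. The paper implements this as an explicit coupling of two copies $\I,\J$ of $\RI(\alpha)$ (Lemma~\ref{l_couple_RI}) and then concludes via Dynkin's $\pi$-$\lambda$ theorem that $E$ is independent of itself; your packaging (approximate $E$ in $\F_1$ by a cylinder event $E_0$ and derive $\IP[E]=\IP[E]^2$) is an equivalent and equally standard way to finish. So the route is not genuinely different, but two points in your sketch of the technical core need repair before it becomes a proof.

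First, the quantifiers: the limit that produces the decoupling is $n\to\infty$ with $K_0$ (and $R_0$) \emph{fixed}, not $R_0\to\infty$. The quantity $\IP[E\mid\I(K_0)]$ does not depend on $R_0$, so ``letting $R_0\to\infty$'' proves nothing; and growing the conditioning ball only adds trajectories to control. What saves you is that $E\in\F_n$ for \emph{every} $n$, so after fixing $K_0$ you may take $n$ as large as you like; then (i) every trajectory meeting $\B(R_0)$ enters $\Theta_n$ with nearly the same (harmonic) entrance law regardless of its history inside (this is where \cite[Proposition~6.4.5]{LL10} enters, with the \emph{outer} radius growing), and (ii) once in $\Theta_n$ the walkers return to the inner region with probability only $O(\ln\ln n/\ln n)$, which is what lets you identify continuations that are conditioned on different events (avoiding $\{0\}$ versus avoiding $\{0\}\cup K$). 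Second, the dependence through the trajectory count is \emph{not} automatically negligible and your parenthetical dismissal of it is the real gap: conditioning on $\xi_K=k$ adds $k$ trajectories to the count crossing every annulus, and the conditional and unconditional counts are Poisson variables with different parameters, one of them shifted by $k$. Closing the argument requires an explicit total-variation bound of the form $\distTV(\text{Poisson}(\lambda)+k,\text{Poisson}(\lambda+h))=O((k+h)/\sqrt{\lambda})$, applied with $\lambda=\pi\alpha\capa(\B(\ln n))\asymp\ln\ln n\to\infty$ (this is the paper's Lemma~\ref{l_Poisson_TV}). Without that estimate --- and it is delicate precisely because in two dimensions the capacity grows only doubly logarithmically --- the ``the conditional law depends only on the count, which is negligible'' step does not go through.
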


\begin{proof}
The proof of this result is similar to the arguments in Section 3.2 of~\cite{ProTyk11}, 
but with necessary adaptations to our situation.
First,
we need to recall an elementary
 result on the total variation distance~$\distTV$
between Poisson distributions of different
(but relatively close) rates, 
as well as between a Poisson distribution and its 
shifted version:
\begin{lem}
\label{l_Poisson_TV}
 There exists a universal constant $c$
 such that, for all $\lambda, h>0$,
\begin{equation}
\label{eq_Poisson_TV}
\distTV\big(\text{Poisson}(\lambda+h),
 \text{Poisson}(\lambda)\big)
\leq c \frac{h}{\sqrt{\lambda}}.
\end{equation}
Also, we have
\begin{equation}
\label{eq_Poisson+1_TV}
\distTV\big(\text{Poisson}(\lambda),
 \text{Poisson}(\lambda)+1\big)
\leq \frac{1}{2\sqrt{\lambda}}.
\end{equation}
\end{lem}
\begin{proof}
 Being $X\sim \text{Poisson}(\lambda)$, write
(using $\IE a^X = e^{\lambda(a-1)}$)
\begin{align*}
2\distTV\big(\text{Poisson}(\lambda+h),
 \text{Poisson}(\lambda)\big)
 &= 
 \sum_{k=0}^\infty
   \Big|e^{-(\lambda+h)}\frac{(\lambda+h)^k}{k!}
   -e^{-\lambda}\frac{\lambda^k}{k!} \Big|\\
  &= \IE \big|e^{-h}(1+\tfrac{h}{\lambda})^X-1\big| \\
& \leq \sqrt{\IE \big(e^{-h}(1+\tfrac{h}{\lambda})^X-1\big)^2}
\\ 
 &= \sqrt{e^{h^2/\lambda}-1}, 
\end{align*}
which implies~\eqref{eq_Poisson_TV}. Similarly,
we have
\[
 2\distTV\big(\text{Poisson}(\lambda),
 \text{Poisson}(\lambda)+1\big)
  = \IE \Big|\frac{X}{\lambda}-1\Big|
  \leq \frac{1}{\lambda}\sqrt{\IE(X-\lambda)^2}
  =\frac{1}{\sqrt{\lambda}},
\]
which shows~\eqref{eq_Poisson+1_TV}.
\end{proof}

Next, we argue that the trajectories that pass close
to the origin are ``not important at infinity'',
in the following sense:
%
\begin{lem}
\label{l_couple_RI}
Fix a finite~$K\subset \Z^2$ and let~$\NN$ be a
set of finite noodle configurations on~$K$.
Then, for any $\eps>0$
there exists a coupling of two copies~$\I$ and~$\J$ 
of~$\RI(\alpha)$
such that
\begin{itemize}
 \item[(i)] $\I$ and $\J(K)$ are independent;
 \item[(ii)] 
 there exists
(large enough)~$n$ such that
  \begin{equation}
\label{eq_couple_RI}
 \IP\big[\I(\Theta_n) = \J(\Theta_n) \mid \J(K)\in\NN\big] 
  \geq 1-\eps.
\end{equation}
\end{itemize}
\end{lem}

\begin{proof}
Let~$\xi_K=|\sigma_K \J|$
be the number of $\J$'s trajectories
that intersect~$K$. We denote these trajectories
by $\vr^{(1)}, \ldots, \vr^{(\xi_K)}$,
where $\vr^{(j)}=(\vr^{(j)}(m), m\in\Z)$.
Let (see Figure~\ref{f_xi_D})
\begin{align*}
 \tau^{(j)}_- &= \min\big\{m: \vr^{(j)}(m)\in K\big\},\\
 \tau^{(j)}_+ &= \max\big\{m: \vr^{(j)}(m)\in K\big\}
\end{align*}
be the times when the $j$th trajectory first enters~$K$
and leaves~$K$ for good; then, 
$(\vr^{(j)}(\tau^{(j)}_- -k), k=0,1,2,\ldots)$
and
$(\vr^{(j)}(\tau^{(j)}_+ +k), k=0,1,2,\ldots)$
are conditioned SRW's trajectories also conditioned
on not re-entering~$K$; we will refer to these
as \emph{escape trajectories}.
An important observation is that the escape trajectories
are conditionally independent from the 
``inner part'' $(\vr^{(j)}(m), j=1,\ldots,\xi_K,
     \tau^{(j)}_-\leq m \leq \tau^{(j)}_+)$:
to obtain the interlacement configuration on~$K$,
it suffices to generate the above collection of 
finite pieces of trajectories; to obtain the whole
trajectories of~$\sigma_K\J$, one has to further
run~$2\xi_K$
independent conditioned (on not hitting $K\cup\{0\}$)
random walks started at 
$(\vr^{(j)}(\tau^{(j)}_\pm), j=1,\dots,\xi_K)$,
but this can be done at a later stage of the coupling's
construction.
     Define (again, see Figure~\ref{f_xi_D})
\[
 D_K = \max_{\substack{j=1,\ldots,\xi_K\\
     \tau^{(j)}_-\leq m \leq \tau^{(j)}_+ }} 
     \|\vr^{(j)}(m)\| 
\] 
to be the maximal distance from the origin achieved by these 
finite pieces of the ``inner part''.
\begin{figure}
\begin{center}
\includegraphics{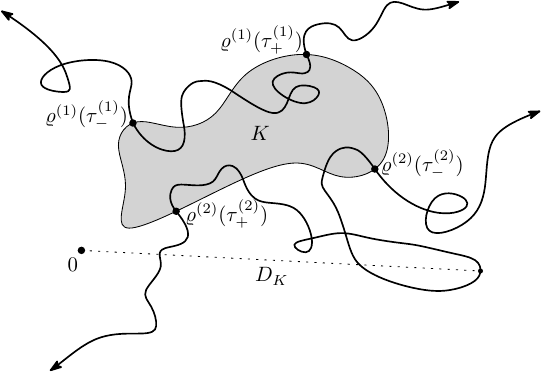}
\caption{On the definition of the auxiliary random variables
(here, $\xi_K=2$).}
\label{f_xi_D}
\end{center}
\end{figure}

For a given~$\eps>0$, let us choose $m_0, \gamma_0$
in such a way that 
\begin{equation}
\label{control_xi_D}
 \IP\big[\xi_K\leq m_0, D_K\leq \gamma_0
    \mid \J(K)\in\NN\big]\geq 1-\frac{\eps}{2}.
\end{equation}

The idea of the proof is illustrated 
on Figure~\ref{f_couplingRI}: 
we keep the trajectories outside~$\Lambda_{\ln n}$
the same in both the interlacement processes~$\I$
and~$\J$, 
but (after obtaining the value of~$\xi_K$) resample those
that intersect~$\Lambda_{\ln n}$ in such a way that,
with high probability 
with respect to $\IP[\;\cdot \mid \J(K)\in\NN]$,
the total numbers of such trajectories in~$\I$ and~$\J$
are equal; then, we couple these trajectories
on their first entrances to~$\Theta_n$ and argue that,
with high probability, these trajectories will remain 
coupled forever. 
\begin{figure}
\begin{center}
\includegraphics{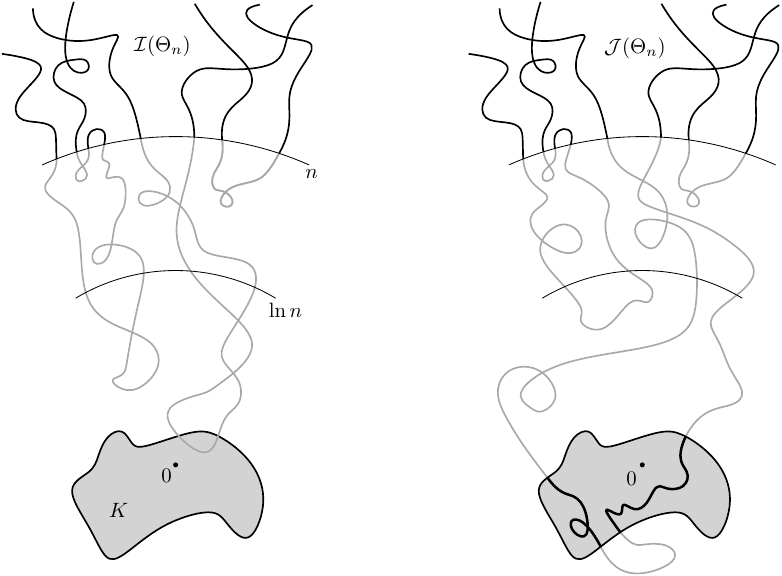}
\caption{Construction of the coupling.}
\label{f_couplingRI}
\end{center}
\end{figure}

We now describe the construction in a more detailed way. 
First, we sample the ``inner part'' (see above)
of the trajectories of~$\sigma_K\J$ in such a way
that the event
\[
\{\J(K)\in\NN, \xi_K\leq m_0, D_K\leq \gamma_0\}
\]
occurs;
this happens with $\IP[\;\cdot \mid \J(K)\in\NN]$-probability
at least~$1-\frac{\eps}{2}$
(so, from now on we can treat~$\xi_K$ as a fixed number
not exceeding~$m_0$).
Let~$n$ be such that $K\subset\B(\ln n)$
and $D_K\leq n-1$.
Then, we need to couple the numbers of particles
that touch~$\Lambda_{\ln n}$ in both processes;
it is Poisson$(\pi\alpha\capa(\B(\ln n))$ in~$\I$
and $\xi_K+\text{Poisson}\big(\pi\alpha
\big(\capa(\B(\ln n))-\capa(\{0\}\cup K)\big)\big)$
in~$\J$.
By Lemma~\ref{l_Poisson_TV} 
(note that $\capa(\B(\ln n))\asymp \ln\ln n$), 
this coupling will be successful
with probability at least 
$1 - O\big(\tfrac{m_0}{\sqrt{\ln\ln n}}\big)$.
So, given that the coupling is a success and
abbreviating $Y=|\sigma_{\Lambda_{\ln n}}\I|$,
we then have
\begin{itemize}
\item in~$\I$, there are~$Y$ 
 (independent) random walks originating somewhere
 at $\Lambda_{\ln n}$ and conditioned on not hitting~$\{0\}$,
 and~$Y$ random walks originating
 at $\Lambda_{\ln n}$ and conditioned 
 on not re-entering~$\Lambda_{\ln n}$;
\item in~$\J$, there are~$2\xi_K$ random walks 
 (which are the ``escape trajectories'' discussed
 above) originating at~$\partial K$ and conditioned
 on not hitting $\{0\}\cup K$, 
 $Y-\xi_K$ random walks originating
 at $\Lambda_{\ln n}$ and conditioned 
 on not hitting~$\{0\}\cup K$, and
 $Y-\xi_K$ random walks originating
 at $\Lambda_{\ln n}$ and conditioned 
 on not re-entering~$\Lambda_{\ln n}$.
\end{itemize}

Next, by~\cite[Proposition~6.4.5]{LL10} (which controls
the conditional exit measure to the boundary of a large
disk and can be used to obtain that all the above random 
walkers have essentially the same entrance measure 
to~$\Theta_n$), 
the probability of successfully coupling
two conditioned random walkers originating in $\B(\ln n)$
to have the common entry point to~$\Theta_n$
is $1 - O\big(\tfrac{\ln n \ln\ln n}{n}\big)$
and we need to take care of only $O(\ln\ln n)$
trajectories. Now, it remains to assure that
with high probability these pairs of trajectories will remain 
coupled (note that they are conditioned on different things).
For this, note that (for a random walk~$\s$ conditioned
on not hitting the origin)
the probability of ever reaching~$\Lambda_{\ln n}$
starting somewhere at~$\Lambda_n$ is 
$O\big(\tfrac{\ln\ln n}{\ln n}\big)$ (by, e.g.,
Lemma~3.4 of~\cite{CPV16}),
and we have to deal with $O(\ln \ln n)$ walkers.
Therefore, on arrival to~$\Theta_n$,
we can just use the same piece of trajectory
(sampled according to the law of the conditioned SRW)
for each pair, and this will be successful
with probability at least
$1-O\big(\tfrac{(\ln\ln n)^2}{\ln n}\big)$.
Gathering the pieces, we obtain that
 the overall probability of success of the coupling
(when the event in~\eqref{control_xi_D}
occurs) is 
at least 
$1 - \tfrac{cm_0}{\sqrt{\ln\ln n}}$,
for a large enough~$c$.
We can then choose~$n$ in such a way that
$\tfrac{cm_0}{\sqrt{\ln\ln n}}\leq \tfrac{\eps}{2}$,
and this concludes the proof of Lemma~\ref{l_couple_RI}.
\end{proof}

Now, we are ready to finish the proof 
of Theorem~\ref{t_spacewise_01}. Let~$E$ be a tail event. 
For a finite~$K\subset\Z^2$
and~$\NN$ a set of noodle configurations on~$K$, consider 
the two copies~$\I$ and~$\J$ of~$\RI(\alpha)$
as in Lemma~\ref{l_couple_RI}; let~$\cCC$
be the event that the coupling is successful.
We can write
\begin{align*}
\lefteqn{
 \IP\big[\I\in E\big] 
}\\
\intertext{\qquad 
\footnotesize (since $E$ is a tail event
and by (i) of Lemma~\ref{l_couple_RI})}
     &= \IP\big[\I(\Theta_n)\in E
     \mid \J(K)\in\NN\big]  \\
 \intertext{\qquad 
 \footnotesize (since, on $\cCC$, we have
$\I(\Theta_n)=\J(\Theta_n)$) }
    &= \IP\big[\J(\Theta_n)\in E, \cCC
     \mid \J(K)\in\NN\big] 
     + \IP\big[\I(\Theta_n)\in E, \cCC^\complement
     \mid \J(K)\in\NN\big] \\
\intertext{\qquad      
\footnotesize (again, since $E$ is a tail event)}
    &= \IP\big[\J\in E
     \mid \J(K)\in\NN\big]
      -\IP\big[\J\in E, \cCC^\complement
     \mid \J(K)\in\NN\big]\\
 & \qquad   {}   + \IP\big[\I\in E, \cCC^\complement
     \mid \J(K)\in\NN\big],
\end{align*}
which means that, since $\J$ is a copy of~$\I$
and by~\eqref{eq_couple_RI}
\[
\Big|\IP\big[\I\in E\big]-\IP\big[\I\in E
     \mid \I(K)\in\NN\big]\Big| 
     \leq \IP[\cCC^\complement \mid \J(K)\in\NN]
     \leq \eps.
\]
Using that~$\eps$
is arbitrary, we obtain that the events
$\{\I\in E\}$ and $\{\I(K)\in\NN\}$ are independent
for any choice of (finite) $K$ and~$\NN$.
In a standard way,\footnote{Observe that all events
independent to a given event form a $\lambda$-system,
and $(\{\I(K)\in\NN\})$ is a $\pi$-system.}
Dynkin's $\pi$-$\lambda$ theorem then implies that
the event $\{\I\in E\}$ is independent of itself,
and so its probability must be equal to~$0$ or~$1$.
 This concludes
the proof of Theorem~\ref{t_spacewise_01}.
\end{proof}

\begin{rem}
It is worth mentioning that all the discussion of this
paper also applies to two-dimensional
 Brownian random interlacements~\cite{CP20}.
In particular, one can define the tail events 
and prove the 0-1 law, and
essentially the same proof
of the fact that the 
$s$-interior of the critical 
vacant set~$\mathcal{D}_s(\V^1)$ is a.s.\ unbounded
works in the continuous setting.
\end{rem}

\section*{Acknowledgements}
The authors thank Y.~Abe and O.~Zeitouni for discussions
on excursions and cover times (as mentioned in the proof
of Proposition~\ref{p_ind_exc_notcover});
also, they thanks the referees for useful comments and
suggestions.
The authors were partially supported by
CMUP, member of LASI, which is financed by national funds
through FCT --- Funda\c{c}\~ao
para a Ci\^encia e a Tecnologia, I.P., 
under the project with reference UIDB/00144/2020.

\end{document}